\newtheorem{theorem}{Theorem}[section]
\newtheorem{lemma}[theorem]{Lemma}
\newtheorem*{lemma*}{Lemma}
\newtheorem{corollary}[theorem]{Corollary}
\theoremstyle{definition}
\newtheorem{example}[theorem]{Example}
\theoremstyle{remark}
\numberwithin{equation}{section}
\DeclareMathOperator{\diff}{d\!}
\begin{document}
\title{A sharp estimate of area for sublevel-set of Blaschke products}

\author[D. Kalaj]{David Kalaj}
\address{University of Montenegro, Faculty of natural sciences and mathematics, Podgorica, Cetinjski put b.b. 81000 Podgorica, Montenegro }
\email{davidk@ucg.ac.me}

\footnote{2020 \emph{Mathematics Subject Classification}: Primary 35J60; Secondary  30C70}


\keywords{Blaschke product, isoperimetric inequality, Level set }


\maketitle

\begin{abstract}
Let $\mathbb{D}$ be the unit disk in the complex plane. Among other results, we prove the following curious result for a finite Blaschke product:  $$B(z)=e ^{is}\prod_{k=1}^d \frac{z-a_k}{1-z \overline{a_k}}.$$ The Lebesgue measure of the sublevel set of $B$ satisfies the following sharp inequality for $t \in [0,1]$: $$|\{z\in \mathbb{D}:|B(z)|<t\}|\le \pi t^{2/d},$$ with equality at a single point $t\in(0,1)$ if and only if $a_k=0$ for every $k$. In that case the equality is attained for every $t$.
\end{abstract}

\maketitle
\section{Introduction}

In this paper, we prove some sharp estimates for the sublevel set and superlevel set of certain mappings having a log-subharmonic modulus. Sharp estimates for these sets have been substantial for the development of optimal estimates in the Faber-Krahn inequality in the works of various authors \cite{Nicola, ramostilli, kulikov2022, kalajramos2024, kalaj2024, Inv2, frank}. In most of those results, functions with compact support at the domain have been considered. In this paper, we consider a different case, where the functions do not have compact support.

\subsection{Quasiregular mappings}

Assume that $\Omega, D$ are domains in the complex plane.
For a smooth mapping $f: \Omega \to D$, we say that $f$ is $K$-quasiregular for $K \ge 1$ if
\begin{equation}\label{quasireg}
|f_{\bar z}| \le \frac{K-1}{K+1}|f_z|
\end{equation}
for $z \in \Omega$. If $f$ is an injection, then we call it $K$-quasi-conformal. If $f$ is quasiregular, then its Jacobian
\[ J(z, f) = |f_z|^2 - |f_{\bar z}|^2 \ge \frac{4K}{(1+K)^2}|f_z|^2 \ge 0. \]
So $f$ is sense-preserving. We say that a smooth mapping of the unit disk onto itself is $K$-quasiregular at the boundary if $f$ has a differentiable extension up to the boundary and the inequality \eqref{quasireg} is true for $z = e^{is}$ for almost every $s \in [0, 2\pi]$. In that case, for almost every $s$,
\begin{equation}\label{cohold}
|\partial_r f(z)| \le K|\partial_t f(z)|,
\end{equation}
where $z = e^{is}$.

\subsection{Index of a curve}

The integer $d$ is called the winding number of a closed piecewise smooth curve $\gamma: [0, 2\pi] \to \mathbb{C} \setminus \{w\}$ around $w$, representing the total number of times that the curve travels counterclockwise around $w$. That number is denoted by $\mathrm{Ind}_\gamma(w)$ and can be calculated by the formula
\[ \mathrm{Ind}_\gamma(w) = \frac{1}{2\pi i}\int_0^{2\pi} \frac{\gamma'(s)}{\gamma(s)-w} \, ds. \]
Assume that $\mathbb{D}$ is the unit disk and $\mathbf{T}$ is the unit circle.
If $f: \mathbb{D} \to \mathbb{D}$ is a smooth mapping having a smooth extension up to the boundary $\mathbf{T} = \partial \mathbb{D}$, then the curve $\gamma(s) = f(e^{is})$, $s \in [0, 2\pi]$, has a finite index around 0 or any other point $w \in \mathbb{D}$.

The main result of this paper is the following theorem:

\begin{theorem}\label{teo2}
Let $f$ be a real analytic mapping of the unit disk onto itself which is $K$-quasiregular at the boundary having a log-subharmonic modulus. For $t \in [0,1]$, let $A_t = \{z: |f(z)| \ge t\}$. Then
\[ \mu(t) = |A_t| \ge \pi - \pi t^{2/(Kd)}, \]
where $d = \mathrm{Ind}_{f(\mathbf{T})}(0)$ is the winding number of $f(\mathbf{T})$ around the origin. The equality is attained if and only if $f(z) = e^{ids} |z|^{Kd}$, $z=|z|e^{is}$.
\end{theorem}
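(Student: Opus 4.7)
The plan is to reduce Theorem~1.1 to a differential inequality for the sublevel-measure
$\phi(t):=|\{z\in\mathbb{D}:|f(z)|<t\}|=\pi-\mu(t)$. The three ingredients are: a flux bound for $\nabla\log|f|$ coming from subharmonicity of $\log|f|$ together with the boundary quasiregularity, the co-area formula, and the classical isoperimetric inequality applied to the planar domain $\{|f|<t\}$.

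\textit{Flux bound.} Set $u=\log|f|$. The maximum principle (applied to the subharmonic function $u\le 0$ on $\overline{\mathbb{D}}$) forces $u\equiv 0$, i.e.\ $|f|\equiv 1$, on $\mathbf{T}$. Apply Green's identity to $u$ on the superlevel set $\Omega_t=\{|f|>t\}$: using $\Delta u\ge 0$ (as a positive distribution) and that the outer unit normal on $\{|f|=t\}$ points opposite to $\nabla u$, one gets
\[
\int_{\{|f|=t\}}|\nabla u|\,d\mathcal{H}^1 \;\le\; \int_{\mathbf{T}}\partial_r u\,d\mathcal{H}^1.
\]
On $\mathbf{T}$, write $f(e^{is})=e^{i\theta(s)}$; since $|f|\equiv 1$ there, $|\partial_s f|=|\theta'(s)|$, and sense-preservation forces the boundary map to be monotone, so $\int_0^{2\pi}\theta'(s)\,ds=2\pi d$. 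The $K$-quasiregular bound $|\partial_r f|\le K|\partial_s f|$ yields $\partial_r|f|\le K\theta'$ pointwise on $\mathbf{T}$, hence $\int_{\mathbf{T}}\partial_r u\,d\mathcal{H}^1\le 2\pi Kd$. On $\{|f|=t\}$, $|\nabla u|=|\nabla|f||/t$, so this translates into $\int_{\{|f|=t\}}|\nabla|f||\,d\mathcal{H}^1\le 2\pi Kdt$.

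\textit{Differential inequality.} Let $L(t)=\mathcal{H}^1(\{|f|=t\})$. The co-area formula gives $\phi'(t)=\int_{\{|f|=t\}}|\nabla|f||^{-1}\,d\mathcal{H}^1$; the isoperimetric inequality on the bounded open set $\{|f|<t\}$ (whose boundary in $\mathbb{D}$ is precisely $\{|f|=t\}$, as $|f|=1>t$ on $\mathbf{T}$) gives $4\pi\phi(t)\le L(t)^2$; and Cauchy--Schwarz combined with the flux bound from Step~1 yields $L(t)^2\le (2\pi Kdt)\,\phi'(t)$. Chaining these,
\[
\frac{d}{dt}\log\phi(t)\;\ge\;\frac{2}{Kd}\cdot\frac{1}{t},
\]
and integrating from $t$ to $1$ with $\phi(1)=|\mathbb{D}|=\pi$ produces $\phi(t)\le \pi\,t^{2/(Kd)}$, equivalently $\mu(t)\ge \pi-\pi t^{2/(Kd)}$.

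\textit{Rigidity.} The main obstacle is the equality analysis. Equality at a single $t_0\in(0,1)$ forces pointwise equality on $[t_0,1]$ in each of the four inequalities used: the isoperimetric inequality (every $\{|f|<s\}$ is a Euclidean disk), Cauchy--Schwarz ($|\nabla|f||$ is constant on each level curve), Green's identity ($\Delta\log|f|\equiv 0$ on $\{|f|>t_0\}$), and the boundary estimate ($|\partial_r f|=K|\partial_s f|$ with $\partial_r\arg f=0$ on $\mathbf{T}$). Together the first three force $\log|f|$ to be radial and harmonic on $\{|f|>t_0\}$, hence $|f(z)|=|z|^{c}$ for some $c>0$ there, and the saturated boundary identity fixes $c=Kd$. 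Real analyticity propagates $|f(z)|=|z|^{Kd}$ to all of $\mathbb{D}$, and integrating the resulting Beltrami-type equation with boundary winding $d$ pins down $f(z)=e^{ids}|z|^{Kd}$.
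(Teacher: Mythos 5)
Your proof of the inequality itself follows the paper's argument essentially step for step: co-area formula, Cauchy--Schwarz on the level curve, the isoperimetric inequality \eqref{isoine} for the sublevel set, Green's identity with $\Delta\log|f|\ge 0$ to replace the flux through $\{|f|=t\}$ by the flux through $\mathbf{T}$, and the bound $\int_{\mathbf{T}}\partial_r u\le 2\pi Kd$ obtained from \eqref{cohold} and the monotone boundary argument function of Lemma~\ref{ulu}. (That you integrate $(\log\phi)'\ge 2/(Kdt)$ directly while the paper passes to the inverse function $\lambda=\mu^{-1}$ is cosmetic; both are fine for a monotone $\phi$, since the integral of the derivative is bounded by the increment in the favorable direction.) One genuine error in this part: your opening claim that the maximum principle applied to the subharmonic function $u=\log|f|\le 0$ ``forces $u\equiv 0$ on $\mathbf{T}$'' is false --- the maximum principle only says the supremum is attained at the boundary, not that every boundary value equals it. That $|f(z)|\to 1$ as $|z|\to 1$ is a hypothesis (it appears explicitly in Lemma~\ref{ulu} and is used silently in the theorem), not a consequence of surjectivity: a smooth surjection of $\mathbb{D}$ onto itself can have boundary arcs with $|f|<1$.

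On rigidity your proposal actually attempts more than the paper does --- the paper's proof of Theorem~\ref{teo2} contains no equality analysis at all beyond checking that $f_K(z)=e^{ids}|z|^{Kd}$ attains the bound; the full rigidity argument appears only in the holomorphic case, in Corollary~\ref{qoro}, via the maximum principle applied to $\log\bigl|B(z)\bigl(\tfrac{1-z\overline{z_0}}{z-z_0}\bigr)^{d}\bigr|$. Your sketch has two concrete gaps. First, equality in the isoperimetric inequality makes each sublevel set a Euclidean disk, but those disks need not be concentric or centered at the origin, so the jump to ``$\log|f|$ is radial'' is unjustified; ruling out the eccentric configuration requires an actual argument (this is exactly what the Möbius/maximum-principle step in Corollary~\ref{qoro} accomplishes for Blaschke products). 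Second, the final step cannot work in principle: the distribution $\mu$ depends only on $|f|$, so no chain of equalities in the four inequalities you list can determine $\arg f$ in the interior. Concretely, $f(z)=z|z|^{2}e^{i(1-|z|^2)^2}$ with $K=3$, $d=1$ is real analytic on $\overline{\mathbb{D}}$, maps $\mathbb{D}$ onto itself, has $\log|f|=3\log|z|$ subharmonic, satisfies \eqref{quasireg} with equality on $\mathbf{T}$ (there $f_r=3e^{is}$, $f_s=ie^{is}$, so $|f_{\bar z}|=1=\tfrac{K-1}{K+1}|f_z|$), and attains $\mu(t)=\pi-\pi t^{2/(Kd)}$ for \emph{every} $t$, yet it is not $e^{is}|z|^{3}$. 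So the most your equality chain can yield is $|f(z)|=|z|^{Kd}$ together with the boundary map $e^{i(ds+\tau)}$; the claim that ``integrating the resulting Beltrami-type equation pins down $f$'' is not salvageable (this overstatement is inherited from the theorem's own formulation, but it should be flagged rather than asserted).
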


We need the following simple lemma:

\begin{lemma}\label{ulu}
Let $f$ be a sense-preserving smooth mapping of the unit disk onto itself having a smooth extension up to the boundary so that $\lim_{|z| \to 1} |f(z)| = 1$. Then,
\[ \int_{|z|=1} |\partial_s f(z)| \, |dz| = 2\pi \mathrm{Ind}_{f(\mathbf{T})}(0) = 2\pi d. \]
Moreover, $f(e^{is}) = e^{i\phi(s)}$, where $\phi: [0, 2\pi] \to [\tau, \tau+2\pi d]$ is a non-decreasing surjection.
\end{lemma}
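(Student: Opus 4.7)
The plan is to lift the boundary map to a real-valued phase function and then show that the hypothesis of being sense-preserving forces this phase to be monotone, from which both assertions follow.

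First I would use the hypothesis $|f|\to 1$ at the boundary (together with smoothness up to $\mathbf{T}$) to conclude that the curve $s\mapsto f(e^{is})$ takes values in $\mathbf{T}$. Since it is smooth and closed, a standard lifting argument produces a smooth $\phi:[0,2\pi]\to\R$ with $f(e^{is})=e^{i\phi(s)}$ and $\phi(0)=\tau$ for some $\tau\in\R$; then $\partial_s f(e^{is})=i\phi'(s)e^{i\phi(s)}$, so $|\partial_s f(e^{is})|=|\phi'(s)|$.

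The main step is to show that $\phi'\ge 0$. Writing $f=Re^{i\Phi}$ in polar coordinates $z=re^{is}$ on the domain, a direct calculation gives the formula
\[ J(z,f) \;=\; \frac{R\bigl(R_r\Phi_s - R_s\Phi_r\bigr)}{r}. \]
On the boundary circle $r=1$ we have $R\equiv 1$, hence $R_s\equiv 0$, so this reduces to $J(e^{is},f)=R_r(1,s)\,\Phi_s(1,s)$. Now $|f|\le 1$ in $\dd$ attains its maximum on $\mathbf{T}$, which forces the outward radial derivative $R_r(1,s)\ge 0$; and the sense-preserving assumption gives $J(e^{is},f)\ge 0$. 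Therefore $R_r(1,s)\,\phi'(s)\ge 0$, and at every $s$ for which $R_r(1,s)>0$ we conclude $\phi'(s)\ge 0$. The delicate point is handling the set where $R_r(1,s)=0$; here I would argue that this set cannot contain an open arc (otherwise $R$ would violate the boundary Hopf lemma for the smooth sense-preserving map), so by continuity of $\phi'$ the inequality $\phi'(s)\ge 0$ extends to every $s\in[0,2\pi]$.

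Once monotonicity of $\phi$ is established, the integral identity is immediate: parametrising $z=e^{is}$ so that $|dz|=ds$,
\[ \int_{|z|=1}|\partial_s f(z)|\,|dz| \;=\; \int_0^{2\pi}\phi'(s)\,ds \;=\; \phi(2\pi)-\phi(0). \]
To identify this difference with $2\pi d$, I would plug the lifted parametrisation into the defining integral for the winding number:
\[ d\;=\;\mathrm{Ind}_{f(\mathbf{T})}(0)\;=\;\frac{1}{2\pi i}\int_0^{2\pi}\frac{i\phi'(s)e^{i\phi(s)}}{e^{i\phi(s)}}\,ds\;=\;\frac{\phi(2\pi)-\phi(0)}{2\pi}, \]
so $\phi(2\pi)=\tau+2\pi d$. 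Combined with monotonicity and the intermediate value theorem, this shows that $\phi$ is a non-decreasing surjection of $[0,2\pi]$ onto $[\tau,\tau+2\pi d]$, completing both conclusions of the lemma. The main obstacle I anticipate is the rigorous justification that $\phi'\ge 0$ everywhere (not just where $R_r>0$); if the Hopf-type argument above seems insufficient, an alternative is to invoke a degree/argument-principle reasoning that a sense-preserving smooth map with $|f|=1$ on $\mathbf{T}$ must send $\mathbf{T}$ onto $\mathbf{T}$ in an orientation-preserving (hence monotone) fashion.
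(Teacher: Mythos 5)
Your proposal follows the paper's proof in all essentials: the same lifting $f(e^{is})=e^{i\phi(s)}$, the same polar Jacobian identity (your $J=\frac{R(R_r\Phi_s-R_s\Phi_r)}{r}$ is exactly the paper's $J=\frac1r\Im(f_s\overline{f_r})$, which on $\mathbf{T}$ reduces to $\phi'(s)\Re(f\overline{f_r})=\phi'(s)R_r(1,s)$), the same maximum-principle observation that $R_r(1,s)\ge 0$ (the paper phrases it as $\lim_{r\to1}\frac{1-|f|^2}{1-r}=2\Re[f_r\overline f]\ge0$), and the same integration of $\phi'$ identified with the winding number. So in structure the two arguments coincide.

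The one place you go beyond the paper is the degenerate set $\{s: R_r(1,s)=0\}$, and here your patch does not hold up as stated. Hopf's boundary point lemma requires an elliptic differential inequality (e.g.\ superharmonicity of $-\log|f|$), which the lemma's hypotheses --- smooth and sense-preserving only --- do not supply. Indeed your claim that the zero set of $R_r(1,\cdot)$ contains no open arc is false in this generality: take $f(z)=\rho(|z|)e^{is}$ with $\rho$ smooth, $\rho'\ge0$, $\rho(0)=0$, $\rho(1)=1$ and $\rho'(1)=0$; this is an admissible sense-preserving surjection with $R_r(1,s)\equiv 0$ on the \emph{entire} circle, so your continuity propagation from the set $\{R_r>0\}$ is vacuous there (the conclusion $\phi'\equiv1\ge0$ still holds, but your argument does not reach it). Your fallback appeal to degree theory is likewise not a proof: degree alone does not exclude local backtracking of the boundary curve without precisely the pointwise sign analysis at issue. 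To be fair, the paper's own write-up has the identical jump --- it deduces $\phi'\ge0$ from $\phi'(s)\Re(f\overline{f_r})\ge0$ and $\Re(f\overline{f_r})\ge0$, which is inconclusive wherever the second factor vanishes --- and in the setting where the lemma is actually applied (Theorem~\ref{teo2}, where $\log|f|$ is subharmonic), your Hopf idea does become rigorous: $-\log|f|$ is a positive superharmonic function vanishing on $\mathbf{T}$, so Hopf gives the strict inequality $R_r(1,s)>0$ everywhere and the degenerate set is empty. So: same method as the paper, with the extra step you correctly flagged as delicate secured only under the stronger hypotheses of the theorem, not of the lemma as stated.
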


\begin{proof}[Proof of Lemma~\ref{ulu}]
First of all, if $z = re^{is}$, then $J(z, f) = \frac{1}{r}\Im(f_s(z) \overline{f_r(z)}) \ge 0$. Thus, for $z = e^{is}$, $f(e^{is}) = e^{i\phi(s)}$ for some smooth function $\phi: [0, 2\pi] \to \mathbb{R}$ so that $\phi(2\pi) = \phi(0) + 2k\pi$, $k \in \mathbb{Z}$, and thus
\begin{equation}\label{phip}
J(z, f) = \phi'(s)\Im(i f(z) \overline{f_r(z)}) = \phi'(s)\Re(f(z) \overline{f_r(z)}).
\end{equation}
Since
\[ \lim_{r \to 1} \frac{1 - |f(z)|^2}{1 - r} = 2\Re [f_r(e^{is}) \overline{f(e^{is})}] \ge 0, \]
in view of \eqref{phip}, we obtain that $\phi'(s) \ge 0$.

Now,
\[ |z| = 1 \Rightarrow |f(z)| = 1 \]
implies that
\[ |z| = 1 \Rightarrow \langle \partial_s f(z), f(z) \rangle = \Re (\partial_s f(z) \overline{f(z)}) = 0. \]
Thus,
\[ |z| = 1 \Rightarrow i \phi'(s) = \partial_s f(z) \overline{f(z)} = \pm i |\partial_s f(z)|. \]
Since $\phi'(s) \ge 0$, we have
\[ |z| = 1 \Rightarrow \frac{\partial_s f(z)}{f(z)} = i |\partial_s f(z)|. \]

Thus,
\[ 2\pi d i = i \int_{|z|=1} \phi'(s) \, dz = i \int_{|z|=1} |\partial_s f(z)| \, |dz|, \]
or what is the same,
\[ \int_{0}^{2\pi} \phi'(s) \, ds = 2 \pi d = \phi(2\pi) - \phi(0). \]
We conclude that $\phi: [0, 2\pi] \to [\tau, \tau + 2\pi d]$ is a non-decreasing function.
\end{proof}

\begin{proof}[Proof of Theorem~\ref{teo2}]
Let $u(z) = |f(z)|$ and $A_t = \{z: u(z) > t\}$, $0 < t < 1$.
We start with the co-area formula
\[ \mu(t) = |A_t| = \int_{A_t} dx = \int_t^{\max u} \int_{|u(z)|=\kappa} |\diff z| \, d\kappa. \]
Then we get
\begin{equation}\label{derlevel}
-\mu'(t) = \int_{\{z: u(z) = t\}} |\nabla u(z)|^{-1} |\diff z|
\end{equation}
along with the claim that $\{z: u(z) = t\} = \partial A_t \cap \mathbb{D}$ and that this set is a smooth curve for almost all $t \in (0, t_\circ)$.
These assertions follow the proof of \cite[Lemma~3.2]{Nicola}. We point out that, since $u$ is real analytic, it is a well-known fact from measure theory that the level set $\{z: u(z) = t\}$ has zero measure \cite{zero}, and this is equivalent to the fact that $\mu$ is continuous.

Following the approach from \cite{Nicola}, our next step is to apply the H\"older inequality with $p=q=2$.

 Then $$|\partial_\circ A_t|^2=\left(\int_{\partial_\circ A_t}|\diff z|\right)^2\le \int_{\partial_\circ A_t} |\nabla u|^{-1} |\diff z|\int_{\partial_\circ A_t} |\nabla u||\diff z|,$$ where $\partial_\circ A_t$ is part of the boundary $A_t$ inside of the unit disk which coincides with the level set $\{z: u(z)=t\}$, shortly written as $u=t$.
Further, let $\mu(t)=|A_t|$. Then
 $$|\partial_\circ A_t|^2\le \mu'(t)(-\int_{u=t} |\nabla u|)|\diff z|.$$
Further by isoperimetric inequality we have \begin{equation}\label{isoine}4\pi (\pi-\mu(t))\le |\partial_\circ A_t|^2.\end{equation} The equality in \eqref{isoine} is attained if and only if $\mathbb{D}\setminus A_t$ is a disk. Thus
$$4\pi (\pi-\mu(t))\le \mu'(t)(-\int_{u=t} |\nabla u(z)||\diff z|).$$
Now $|u(z)||\nabla u(z)|=-\left<\nabla \log u, \mathbf{n}\right>$, where $\mathbf{n}$ is the outer normal at $\partial_\circ A_t$. Therefore $$\int_{u=t} |\nabla u||\diff z|= -t\int_{u=t}\left<\nabla \log u, \mathbf{n}\right>|\diff z|.$$ Hence $$4\pi (\pi-\mu(t))\le -t\mu'(t)(-\int_{u=t}\left<\nabla \log u, \mathbf{n}\right> |\diff z|),$$  where $\mathbf{n}$ is the outer normal at $\partial A_t$.
Now we state the Green formula \begin{equation}\label{green}\int_{|z|=1}\left<\nabla \log u, \mathbf{n}\right>|\diff z|+\int_{u=t}\left<\nabla \log u, \mathbf{n}\right>|\diff z| =\int_{A_t} \Delta \log u \diff x \diff y.\end{equation}
By using \eqref{green} we have
$$4\pi (\pi-\mu(t))\le -t\mu'(t)\left(\int_{|z|=1}\left<\nabla \log u, \mathbf{n}\right>|\diff z|-\int_{A_t} \Delta \log u \diff x\diff y\right).$$
Since $\Delta \log u\ge 0$, then
$$4\pi (\pi-\mu(t))\le -t\mu'(t)\int_{|z|=1}\left<\nabla \log u, \mathbf{n}\right>|\diff z|.$$
For $|z|=1$ we have  $u(z) =1$, and then in view of \eqref{cohold} and Lemma~\ref{ulu}
\[\begin{split}
\int_{|z|=1}\left<\nabla \log u, \mathbf{n}\right>&=\int_{|z|=1}\frac{\left<\nabla u(z), \mathbf{n}\right>}{u(z)}|\diff z|\\
&=\int_{|z|=1}\left<\nabla u(z), \mathbf{n}\right>|\diff z|\\&= \int_{|z|=1}\frac{\partial u}{\partial r}|\diff z|\\&\le\int_{|z|=1}\left|\frac{\partial f(z)}{\partial r}\right||\diff z|\\&\le  \int_0^{2\pi}K\left|\frac{\partial f(e^{is})}{\partial s}\right|\diff s=   2\pi K d.\end{split}\]
Hence $$4\pi (\pi-\mu(t))\le -2t\mu'(t)\pi K d$$
i.e. \begin{equation}\label{mumono}2(\pi-\mu(t))\le -t \mu'(t) Kd.\end{equation}
Let $\lambda(s)=\mu^{-1}(s)$. Then $$2(\pi-s)\le -\frac{K d \lambda(s) }{\lambda'(s)}$$ or what is the same
$$-\frac{\lambda'(s)}{\lambda(s)}\le \frac{Kd}{2(\pi -s)}.$$ By integrating in the interval $[0, s]$, the previous inequality  for $s<\pi$, having in mind the condition $\mu(1)=0$, i.e. $\lambda(0)=1$ we obtain
$$\log \frac{1}{\lambda(s)}\le \frac{Kd}{2}\log\frac{\pi}{\pi-s}.$$
Therefore   $$\frac{1}{\lambda(s)^{2/(Kd)}}\le \frac{\pi}{\pi-s}.$$ Hence
$$\pi - s\le \pi \lambda^{2/(Kd)}(s).$$ Thus $$\mu(t)\ge \pi(1-t^{2/(Kd)}).$$
Since the distribution of $f(z)=e^{ids} |z|^{Kd}$ is $\mu_K(t)=\pi - \pi t^{2/(Kd)}$,
we get
$$\mu(t)\ge \mu_K(t).$$


\end{proof}

\begin{corollary}
Under conditions of Theorem~\ref{teo2}, let $\nu(t)=|\{z: |f(z)|<t\}|$. Then $\nu(t)\le \pi t^{2/(Kd)}$, with an equality if and only if $f(z)=e^{ids}|z|^{Kd}$. \end{corollary}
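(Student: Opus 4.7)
The plan is to deduce the corollary from Theorem~\ref{teo2} by a one-line complementation argument inside the disk. First, I would decompose $\mathbb{D}$ as the disjoint union $\{|f|<t\}\cup\{|f|=t\}\cup\{|f|>t\}$ and invoke the fact, already observed inside the proof of Theorem~\ref{teo2}, that real analyticity of $u=|f|$ forces the level set $\{|f|=t\}$ to have two-dimensional Lebesgue measure zero (this is the point cited via \cite{zero} that also yields continuity of $\mu$). Modulo this negligible set, the sublevel set is the complement of $A_t=\{|f|\ge t\}$ inside $\mathbb{D}$, so $\nu(t)=\pi-\mu(t)$.

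Second, I would insert the bound from Theorem~\ref{teo2} into this identity. From $\mu(t)\ge \pi-\pi t^{2/(Kd)}$ it follows immediately that
\[\nu(t)=\pi-\mu(t)\le \pi t^{2/(Kd)},\]
which is the asserted inequality. For the equality statement, equality in the corollary at a given $t\in(0,1)$ is equivalent to equality $\mu(t)=\pi-\pi t^{2/(Kd)}$ at the same $t$ in Theorem~\ref{teo2}, and the extremizer identified there, $f(z)=e^{ids}|z|^{Kd}$, transfers verbatim; at this extremizer, one checks directly that the sublevel measure equals $\pi t^{2/(Kd)}$ for every $t$, matching the last assertion.

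There is no genuine obstacle here: the corollary is a cosmetic complement of the main theorem, and the entire analytic machinery (the isoperimetric inequality, the use of $K$-quasiregularity at the boundary via \eqref{cohold}, the Green identity for $\log u$, and the Lemma~\ref{ulu} computation of the boundary integral) has already been deployed in the proof of Theorem~\ref{teo2}. The only detail one must flag explicitly is the measure-zero property of the level set, since without it one could not pass between $|A_t|$ and $|\mathbb{D}\setminus A_t|$ by simple subtraction.
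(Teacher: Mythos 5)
Your proposal is correct and coincides with the paper's (implicit) argument: the corollary is stated without proof precisely because it is the complementation $\nu(t)=\pi-\mu(t)\le \pi t^{2/(Kd)}$ of Theorem~\ref{teo2}, with the equality case transferring verbatim, exactly as you describe. One minor remark: since the theorem's statement defines $A_t=\{z:|f(z)|\ge t\}$, the set $\{z:|f(z)|<t\}$ is its exact complement in $\mathbb{D}$, so the measure-zero property of the level set $\{|f|=t\}$ (which you rightly cite via \cite{zero}) is needed only to reconcile this with the strict-inequality definition $A_t=\{u>t\}$ used inside the theorem's proof, not for the subtraction itself.
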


\begin{example}\label{exa}
Let $f(z) = \frac{z+a}{1+z\bar {a}}.$ Then for $0<t<1$ define $A_t=\{z: |f(z)|>t\}.$ Let $\mu_a(t)=|A_t|$.
Then by taking the change of variables $w=f(z)=r e^{i\tau}$, having in mind the  equations $$g(w) = f^{-1}(w)=\frac{w-a}{1-w\bar a}, \ \ \ |g'(w)|^2=\frac{(1-|a|^2)^2}{|1-w\bar a|^4},$$  for $z=x+i y$, we obtain
\[\begin{split} \mu_a(t)&=\int_{|f(z)|>t} \diff x \diff y
\\&=\int_{0}^{2\pi} \int_t^1 r \frac{(1-|a|^2)^2}{|1-w\bar{a}|^4}\diff r \diff \tau.
\end{split}
\]

Now we use the Parseval's formula to obtain  that  \[\begin{split}\int_{0}^{2\pi} \int_t^1 r \frac{(1-|a|^2)^2}{|1-w\bar{a}|^4}\diff r \diff \tau &=2\pi (1-|a|^2)^2\sum_{k=0}^\infty \int_t^1 (1+m)^2|a|^{2m}r^{2m+1}\diff r\\&=\frac{\pi  \left(1-t^2\right) \left(1-|a|^4 t^2\right)}{\left(1-|a|^2 t^2\right)^2}.  \end{split}\]
If we denote $r=|a|$, and $\phi(r)=\frac{\pi  \left(1-t^2\right) \left(1-r^4 t^2\right)}{\left(1-r^2 t^2\right)^2}$, then we obtain $$\phi'(r)=\frac{4 \pi  r \left(1-r^2\right) t^2 \left(1-t^2\right)}{\left(1-r^2 t^2\right)^3}$$ which is clearly a non-negative function. In particular $\phi$ is increasing and thus $\mu_0(t)\le \mu_a(t)$ which confirms our theorem.
\end{example}

\begin{corollary}
Let $f$ be a real analytic mapping of the unit disk onto itself which is $K-$quasi-regular at the boundary having a log-subharmonic modulus.  Assume also that $d=\mathrm{Ind}_{f(\mathbf{T})}(0)$. Then for $p\ge 1$,
\begin{equation}\label{ineqp1}\|f\|_{p}\ge\left(\frac{2\pi}{2 + K dp}\right)^{1/p}.\end{equation} 

The equality in \eqref{ineqp1} is attained for $f_K(z)=e^{id s } |z|^{Kd}$.
\end{corollary}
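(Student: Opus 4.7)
The approach is to reduce the inequality to a one-dimensional integration against the distribution function and then invoke the sharp sublevel-set bound of Theorem~\ref{teo2}. I would begin from the Cavalieri (layer-cake) identity
$$\|f\|_p^p=\int_{\mathbb{D}}|f(z)|^p\,dA(z)=p\int_0^\infty t^{p-1}\mu(t)\,dt,$$
where $\mu(t)=|\{z\in\mathbb{D}:|f(z)|>t\}|$. Since $f$ maps the unit disk into itself, $\mu(t)=0$ for $t\ge 1$, so the integration is confined to $[0,1]$.

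For the second step, I would substitute the lower bound $\mu(t)\ge\pi\bigl(1-t^{2/(Kd)}\bigr)$ furnished by Theorem~\ref{teo2}, which leaves a difference of two elementary power integrals. Combining them gives
$$\|f\|_p^p\ge \pi p\left(\frac{1}{p}-\frac{1}{p+2/(Kd)}\right)=\frac{2\pi}{Kdp+2},$$
and taking the $p$-th root yields \eqref{ineqp1}.

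Sharpness is direct: inserting $f_K(z)=e^{ids}|z|^{Kd}$ and passing to polar coordinates reduces the $L^p$-integral to $2\pi\int_0^1 r^{Kdp+1}\,dr=\tfrac{2\pi}{Kdp+2}$, so \eqref{ineqp1} is attained. Moreover, the equality cases in the chain above force $\mu(t)=\pi\bigl(1-t^{2/(Kd)}\bigr)$ on $(0,1)$, and the equality statement of Theorem~\ref{teo2} then identifies the extremizer with $f_K$.

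The argument is essentially routine; the only point demanding any care is to confirm that the pointwise bound for $\mu$ from Theorem~\ref{teo2} is valid on all of $(0,1)$, not merely almost everywhere, so that the substitution into the Cavalieri integral is unambiguous. This is guaranteed by the continuity of $\mu$ already recorded in the proof of Theorem~\ref{teo2}.
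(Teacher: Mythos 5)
Your proof is correct and follows essentially the same route as the paper: both start from the layer-cake identity $\|f\|_p^p=p\int_0^1 t^{p-1}\mu(t)\,dt$, insert the pointwise bound $\mu(t)\ge\pi\bigl(1-t^{2/(Kd)}\bigr)$ from Theorem~\ref{teo2}, and evaluate the resulting elementary integral to obtain $\frac{2\pi}{2+Kdp}$. Your added remarks on the continuity of $\mu$ and the direct verification of sharpness for $f_K$ are sound refinements, not departures from the paper's argument.
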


\begin{proof} Let $\mu(t)=\{z: |f(z)|\ge t\}$.
We first have $$\|f\|_p^p=p\int_0^1\mu(t) t^{p-1}\diff t.$$ Then from Theorem~\ref{teo2}, $\mu(t)\ge \mu_K(t)=\pi(1-t^{2/(K d)})$, which implies the inequality $$\|f\|^p_p
\ge \|f_K\|^p_p=p\int_0^1\mu_K(t) t^{p-1}\diff t=\frac{2\pi}{2 + K dp}. $$ This finishes the proof.
\end{proof}

In this example is shown that among the M\"obius transformation the identity has the smallest Hilbert norm. 
\begin{example}
Let $f(z) = \frac{z+a}{1+z\bar {a}}.$  Then by  Example~\ref{exa}, we calculate that  \[\begin{split}\|f\|_p^p&=p\int_0^1\mu(t) t^{p-1} \diff t\\&=p\int_0^1\frac{\pi  \left(1-t^2\right) \left(1-|a|^4 t^2\right)}{\left(1-|a|^2 t^2\right)^2} t^{p-1} \diff t.  \end{split}\]

Now for $p=2$ we have $$h(a):=\|f\|_2=\sqrt{\pi } \sqrt{\frac{2 |a|^4-|a|^2-\left(1-|a|^2\right)^2 \log\left[1-|a|^2\right]}{|a|^4}}.$$

Its graphic is given in the Figure~1.

\begin{figure}
\centering
\includegraphics{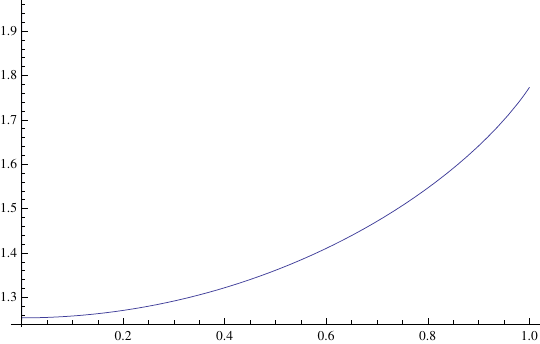}
\caption{The minimum of the function $h(|a|)$ is precisely when $a=0$ and it is equal to $\sqrt{\frac{\pi }{2}}\approx 1.25$. }
\label{21}
\end{figure}

\end{example}
In this example, a large class of radial mappings satisfying the conditions of our theorem is constructed
\begin{example} Let $f(z)= g(r) e^{is}$, where $z=r e^{is}$ and
$$g(s)=\exp\left(-\int_s^1 \left(\frac{a}{u}-\frac{\int _u^1\frac{1}{4} h(v)\diff v}{u}\right) \, \diff u\right).$$ Then $$\Delta \log |f(z)| = h(r).$$
Then $g(1)=1$. Moreover  $f$ is $K-$quasiconformal, where $$K\ge \max_{s}\frac{s g'(s)}{g(s)}=\max_s\left(a-\int_{s}^1 \frac{h(v)}{4}\diff v\right)=a.$$ Moreover $f$ maps the unit disk $\mathbb{U}$ onto itself if $$ {a}-{\int _0^1\frac{1}{4} h(v)\diff v}>0.$$

So for $0<a\le K$, and assuming that $h$ is an integrable positive function satisfying the condition  $$\int_0^1 h(s)\diff s<4a,$$ $f$ is a $K$ quasiconformal mapping so that $\Delta \log |f(z)|$ is subharmonic and satisfies the conditions of our theorem.
\end{example}

\begin{corollary}\label{qoro}
Assume that $f$ is a holomorphic mapping of the unit disk into itself, so that $\lim_{|z|\to 1}|f(z)|=1$. For $t\in[0,1]$ let $\nu(t)=|\{z: |f(z)|<t\}|.$ Then $\nu(t)\le \pi t^{2/d}$, where $d$ is the number of zeros of $f$ inside $\mathbb{D}$ including multiplicity. The equality in the last inequality is attained  for some number $t\in(0,1)$ if and only if $f(z) = e^{i\tau} z^d$ and in that case the equality is attained for every $t\in[0,1]$.

In other words  \begin{equation}\label{blaseq}|\{z:|B(z)|<t\}|\le \pi t^{2/d},\end{equation} where $$B(z)=e ^{is}\prod_{k=1}^d \frac{z-a_k}{1-z \overline{a_k}},$$ with an equality in \eqref{blaseq}  for a single $t\in(0,1)$ if and only if $a_k=0$ for every $k$.
\end{corollary}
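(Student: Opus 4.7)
The plan is to reduce the corollary to Theorem~\ref{teo2} with $K=1$. Three hypotheses must be verified. First, since $f$ is holomorphic, $f_{\bar z}\equiv 0$, so \eqref{quasireg} holds trivially with $K=1$; moreover $\log|f|$ is subharmonic in $\mathbb{D}$ (harmonic away from zeros, with logarithmic singularities of the correct sign at zeros), so $|f|$ is log-subharmonic. Second, the boundary condition $\lim_{|z|\to 1}|f(z)|=1$ combined with holomorphy classically forces $f$ to be a finite Blaschke product $f(z)=e^{is}\prod_{k=1}^{d}\frac{z-a_k}{1-z\overline{a_k}}$: the boundary behavior allows only finitely many zeros, and dividing them out produces a zero-free holomorphic function of modulus one on $\mathbf{T}$, which the maximum modulus principle pins down to a unimodular constant. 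In particular $f$ is real analytic up to $\mathbf{T}$. Third, the argument principle gives $\mathrm{Ind}_{f(\mathbf{T})}(0)=d$, the number of zeros counted with multiplicity.

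Applying Theorem~\ref{teo2} with $K=1$ then yields $\mu(t):=|\{|f|\ge t\}|\ge \pi-\pi t^{2/d}$ for $t\in[0,1]$. Real analyticity of $|f|^2$ implies that the level set $\{|f|=t\}$ has Lebesgue measure zero, so $\nu(t)=\pi-\mu(t)\le \pi t^{2/d}$, which is exactly \eqref{blaseq}.

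For the equality statement, the extremizer in Theorem~\ref{teo2} with $K=1$ is $f(z)=e^{ids}|z|^{d}$, which equals $z^d$ up to a unimodular constant and corresponds to $a_1=\cdots=a_d=0$. For this $f$, equality $\nu(t)=\pi t^{2/d}$ holds at \emph{every} $t\in[0,1]$ by direct computation. The delicate direction is the converse: that equality at a single $t_0\in(0,1)$ forces $f(z)=e^{i\tau}z^d$. The plan is to retrace the chain of inequalities in the proof of Theorem~\ref{teo2} and show that equality at $t_0$ forces simultaneous saturation of each step, namely the Hölder inequality along $\partial_\circ A_{t_0}$, the isoperimetric inequality applied to $\mathbb{D}\setminus A_{t_0}$ (so that $\mathbb{D}\setminus A_{t_0}$ is a genuine round disk), the inequality $\int_{A_{t_0}}\Delta\log|f|\,dxdy\ge 0$ (forcing $f$ to have no zeros inside $A_{t_0}$), and the boundary estimate $|\partial_r f|\le|\partial_s f|$.

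The main obstacle is converting these rigid analytic constraints into the structural conclusion that all $a_k$ vanish. Here I would argue as follows: the complement of $A_{t_0}$ being a round disk means the sublevel set $\{|B|<t_0\}$ of the Blaschke product is a Euclidean disk; but for a finite Blaschke product the sublevel set $\{|B|<t_0\}$ is a hyperbolic disk centered at the common point of its zeros (when they coincide), and is a Euclidean disk precisely when this hyperbolic center is $0$, i.e. when all $a_k=0$. Combining this with the fact that all zeros must agree (otherwise $\{|B|<t_0\}$ is not even simply connected for small $t_0$, and for larger $t_0$ its geometry is not round by a Schwarz--Pick comparison) concludes the equality case and shows that if equality holds at one $t_0$, then $f(z)=e^{i\tau}z^d$ and hence equality propagates to all $t\in[0,1]$.
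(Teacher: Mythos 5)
Your reduction of the inequality \eqref{blaseq} to Theorem~\ref{teo2} with $K=1$ is correct and is exactly the paper's route: Fatou's theorem turns the boundary condition into the statement that $f$ is a finite Blaschke product, $\log|f|$ is subharmonic, holomorphy gives $1$-quasiregularity, and the argument principle identifies $\mathrm{Ind}_{f(\mathbf{T})}(0)$ with the number of zeros $d$. Up to and including $\nu(t)=\pi-\mu(t)\le\pi t^{2/d}$, there is nothing to object to.

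The equality case, however, contains a genuine gap, and the central claim you lean on is false. You assert that $\{|B|<t_0\}$ ``is a Euclidean disk precisely when this hyperbolic center is $0$.'' But every pseudo-hyperbolic disk is a Euclidean disk: for $B(z)=c\left(\frac{z-a}{1-\bar a z}\right)^d$ with \emph{any} $a$, the sublevel set $\{|B|<t_0\}$ is a perfectly round Euclidean disk. What singles out $a=0$ is not roundness but \emph{area}: when $a\neq 0$ the round sublevel set has area strictly less than $\pi t_0^{2/d}$, as Example~\ref{exa} computes for $d=1$. So roundness of the sublevel set alone cannot force all $a_k=0$, and your dichotomy does not establish the ``only if'' direction. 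Your fallback arguments do not repair this: the remark that $\{|B|<t_0\}$ fails to be simply connected ``for small $t_0$'' is irrelevant because $t_0$ is a fixed given value of $t$ that you are not free to shrink, and ``a Schwarz--Pick comparison'' is named but never carried out. The paper closes precisely this gap with a maximum-principle argument absent from your proposal: writing the round disk $\{|B|\le t_0\}$ as a pseudo-hyperbolic disk with center $z_0$, the function
$$u(z)=\log \left|B(z)\cdot \left(\frac{1-z\overline{z_0}}{z-z_0}\right)^d\right|$$
is harmonic on $A_{t_0}=\{z: t_0<|B(z)|<1\}$ (all zeros of $B$, and $z_0$ itself, lie in the complementary disk) and vanishes on both boundary components $\mathbf{T}$ and $\partial_\circ A_{t_0}$; applying the maximum principle to $u$ and $-u$ gives $u\equiv 0$, hence $B(z)=c\left(\frac{z-z_0}{1-z\overline{z_0}}\right)^d$ with $|c|=1$, and only then does the quantitative information from the saturated isoperimetric inequality \eqref{isoine} --- that the level set is the circle of Euclidean radius $t_0^{1/d}$ --- force $z_0=0$. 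Some argument of this type, converting the rigidity at the single level $t_0$ into the global identity $B=e^{i\tau}z^d$, is the actual content of the equality case, and it is the piece your proposal is missing.
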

 \begin{proof}[Proof of Corollary~\ref{qoro}]
 First of all, if $f$ is a holomorphic function mapping the unit disk onto itself such that $\lim_{|z|\to 1}|f(z)|=1$, then, by a result of Fatou \cite{fatou}, $f$ is a finite Blaschke product. Moreover, \eqref{blaseq} is a consequence of Theorem~\ref{teo2}. If, for some $t\neq 0,1$, we have $|\{z:|B(z)|<t\}|= \pi t^{2/d}$, then equality holds in the isoperimetric inequality \eqref{isoine}. In other words, $\{z:|B(z)|=t\}$ is a circle $|z-z_0|= \rho=t^{1/d}$. Therefore,
$$
|e^{is}\prod_{k=1}^d \frac{\rho e^{i\tau}-a_k}{1-\rho e^{i\tau} \overline{a_k}}|=t, \quad \tau\in[0,2\pi].
$$
Thus, the harmonic function
$$
u(z)=\log \left|B(z)\cdot \left(\frac{1-z\overline{z_0}}{z-z_0}\right)^d\right|
$$
is well-defined in the set $A_t=\{z: t<|B(z)|<1\}$ because $z_k \not\in A_t$ for every $k=0,\dots, d$. Recall that $B(a_k)=0$ for $k=1,\dots, d$ and $z_0$ is a point that is not in $A_t$, because if this were the case, $A_t$ would coincide with the unit disk, which is a contradiction. Thus, $u$ is equal to zero on the boundary $\partial A_t=\mathbf{T}\cup \partial_\circ A_t$. By the maximum principle applied to $u$ and $-u$, we conclude that $u\equiv 0$. In particular,
$$
B(z) = c \left(\frac{z-z_0}{1-z\overline{z_0}}\right)^d, \quad |c|=1.
$$
Since $|B(z)|=t$ if and only if $|z-z_0|=t^{1/d}$, we conclude that $z_0=0$.
 \end{proof}
The following example shows the evolution of sublevel set of a Blaschke product when $t$ becomes smaller.
\begin{figure}[H]
\centering
\includegraphics{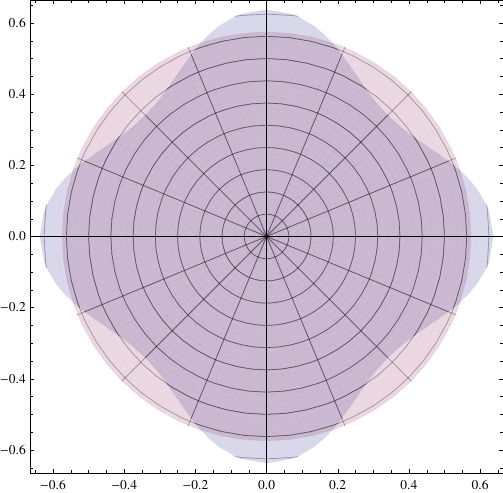}
\caption{The sublevel set of the Blaschke product  $g(z)= z^4$ for $t=1/8$ is a disk and of the Blaschke product $f(z) =\frac{1-16 z^4}{z^4-16}$ is a domain which has darker color (and smaller area). }
\label{21}

\centering
\includegraphics{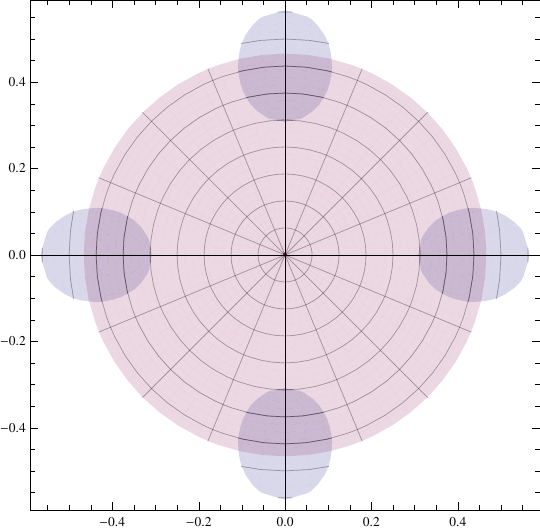}
\caption{The sublevel set of the Blaschke products  $g(z)= z^4$ for $t=1/18$ is a disk and of the Blaschke product $f(z) =\frac{1-16 z^4}{z^4-16}$ is contained of 4 small domains. }
\label{21}
\end{figure}




\begin{thebibliography}{99}
\bibitem{baernstein}
A. Baernstein, II. \emph{Symmetrization in analysis, volume 36 of New Mathematical Monographs. Cambridge University Press, Cambridge,} 2019. With David Drasin and Richard S. Laugesen, With a foreword by Walter Hayman.
\bibitem{fatou} P. Fatou, \emph{Sur les fonctions holomorphes et bornees a l'interieur d'un cercle.} , Bull. Soc.
Math. France 51 (1923), 191-202.
\bibitem{frank}
R. L. Frank,\emph{Sharp inequalities for coherent states and their optimizers.}
Adv. Nonlinear Stud. 23, Article ID 20220050, 28 p. (2023).

\bibitem{Inv2}
J. Gomez; A. Guerra, P. G.  Ramos, P. Tilli,
\emph{Stability of the Faber-Krahn inequality for the short-time Fourier transform.}
Invent. Math. 236, No. 2, 779-836 (2024).

\bibitem{kalaj2024} D. Kalaj, 
\emph{Contraction property of certain classes of log-$M$
-subharmonic functions in the unit ball.}
J. Funct. Anal. 286, No. 1, Article ID 110203, 29 p. (2024).
\bibitem{kalajramos2024}
D. Kalaj, J. P. G. Ramos,
\emph{A Faber-Krahn type inequality for log-subharmonic functions in the hyperbolic ball.}
Preprint, arXiv:2303.08069, To appear in Israel journal of mathematics.
\bibitem{kulikov2022}
A. Kulikov,  \emph{Functionals with extrema at reproducing kernels.} Geom. Funct. Anal., 938-949 (2022).
\bibitem{zero}
B. S. Mityagin,  \emph{The zero zet of a real analytic function.} Math Notes 107, 529--530 (2020). https://doi.org/10.1134/S0001434620030189


\bibitem{Nicola} F.~Nicola, P.~Tilli, \emph{The Faber-Krahn inequality for the short-time Fourier transform.} Inventiones mathematicae 230, 1--30, 2022.
\bibitem{ramostilli}
J. P. G. Ramos, P. Tilli,
\emph{A Faber-Krahn inequality for wavelet transforms}, Bull. Lond. Math. Soc. 55, No. 4, 2018-2034 (2023).



\end{thebibliography}
\end{document}